\documentclass[12pt,a4paper]{article}
\usepackage{indentfirst}
\setlength{\parskip}{3\lineskip}
\usepackage{amsmath,amssymb,amsfonts,amsthm,graphics}
\usepackage{makeidx}
\usepackage{color}

\setlength{\textwidth}{150mm} \setlength{\textheight}{225mm}

\setlength{\oddsidemargin}{0pt}
\setlength{\evensidemargin}{0pt}

\newtheorem{theorem}{Theorem}

\newtheorem{proposition}{Proposition}

\newtheorem{observation}{Observation}

\begin{document}
\title{\Large\bf Note on minimally $k$-rainbow connected graphs\footnote{Supported by NSFC
No.11071130 and `the Fundamental Research Funds for the Central
Universities".}}
\author{\small Hengzhe Li, Xueliang Li, Yuefang Sun, Yan Zhao
\\
\small Center for Combinatorics and LPMC-TJKLC
\\
\small Nankai University, Tianjin 300071, China
\\
\small lhz2010@mail.nankai.edu.cn; lxl@nankai.edu.cn;\\
\small bruceseun@gmail.com; zhaoyan2010@mail.nankai.edu.cn}
\date{}
\maketitle
\begin{abstract}
An edge-colored graph $G$, where adjacent edges may have the same
color, is {\it rainbow connected} if every two vertices of $G$ are
connected by a path whose edge has distinct colors. A graph $G$ is
{\it $k$-rainbow connected} if one can use $k$ colors to make $G$
rainbow connected. For integers $n$ and $d$ let $t(n,d)$ denote the
minimum size (number of edges) in $k$-rainbow connected graphs of
order $n$. Schiermeyer got some exact values and upper bounds for
$t(n,d)$. However, he did not get a lower bound of $t(n,d)$ for
$3\leq d<\lceil\frac{n}{2}\rceil $. In this paper, we improve his
lower bound of $t(n,2)$, and get a lower bound of $t(n,d)$ for
$3\leq d<\lceil\frac{n}{2}\rceil$.

{\flushleft\bf Keywords}: edge-coloring, $k$-rainbow connected,
rainbow connection number, minimum size \\[2mm]
{\bf AMS subject classification 2010:} 05C15, 05C35, 05C40
\end{abstract}

\section{Introduction}

A communication network consists of nodes and links connecting them.
In order to prevent hackers, one can set a password in each link. To
facilitate the management, one can require that the number of
passwords is small enough such that every two nodes can exchange
information by a sequence of links which have different passwords.
This problem can be modeled by a graph and studied by means of
rainbow connection.

All graphs in this paper are undirected, finite and simple. We refer
to book \cite{bondy} for notation and terminology not described
here. A path in an edge-colored graph $G$, where adjacent edges may
have the same color, is a $rainbow\ path$ if no pair of edges are
colored the same. An edge-coloring of $G$ with $k$ colors is a {\it
$k$-rainbow connected coloring} if every two distinct vertices of
$G$ are connected by a rainbow path. A graph $G$ is {\it $k$-rainbow
connected} if $G$ has a $k$-rainbow connected coloring. Note that
Schiermeyer used the term {\it rainbow $k$-connected in
\cite{sch2}}. However we think that it is better to use the term
{\it $k$-rainbow connected} since this will distinguish it from the
term {\it rainbow $k$-connectivity}, which means that there are many
rainbow paths between every pair of vertices, see \cite{char2}. The
rainbow connection number $rc(G)$ of $G$ is the minimum integer $k$
such that $G$ has a $k$-rainbow connected coloring. It is easy to
see that $rc(G)\geq diam(G)$ for any connected graph $G$, where
$diam(G)$ is the diameter of $G$.

The rainbow connection number was introduced by Chartrand et al. in
\cite{char}. Bounds on the rainbow connection numbers of graphs have
been studies in terms of other graph parameters, such as radius,
dominating number, minimum degree, connectivity, etc., see
\cite{bas,chan,char,kri,lili,lisun,sch}. In \cite{chak}, Chakraborty
et al. investigated the hardness and algorithms for the rainbow
connection number, and showed that given a graph $G$, deciding if
$rc(G)=2$ is NP-complete. In particular, computing $rc(G)$ is
NP-hard.

For integers $n$ and $d$ let $t(n,d)$ denote the minimum size
(number of edges) in $k$-rainbow connected graphs of order $n$.
Because a network which satisfies our requirements and has as less
links as possible can cut costs, reduce the construction period and
simplify later maintenance, the study of this parameter is
significant. Schiermeyer \cite{sch2} mainly investigated the upper
bound of $t(n,d)$ and showed the following results.

\begin{theorem}{\upshape (Schiermeyer\cite{sch2})}

$(i)$ $t(n,1)={n\choose{2}}$.

$(ii)$ $t(n,2)\leq (n+1)\lfloor\log_2 n\rfloor-2^{\lfloor\log_2
n\rfloor}-2$.

$(iii)$ $ t(n,3))\leq 2n-5$.

$(iv)$  For $4\leq d <\frac{n-1}{2}$, $t(n,d)\leq
n-1+\left\lceil\frac{n-2}{d-2}\right\rceil.$

$(v)$ For $\frac{n}{2}\leq d\leq n-2$, $t(n,d)=n$.

$(vi)$ $t(n,n-1)=n-1$.
\end{theorem}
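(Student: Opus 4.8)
The plan is to handle the six parts according to their difficulty: $(i)$, $(v)$, $(vi)$ are exact values or characterizations that reduce to two elementary facts, whereas $(ii)$, $(iii)$, $(iv)$ are upper bounds demanding an explicit graph together with a coloring. I expect the constructions, and especially part $(ii)$, to carry all the real work.

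First the easy block. For $(i)$, a single colour forces every rainbow path to have at most one edge, so a $1$-rainbow connected graph must have all pairs adjacent; thus $K_n$ is the only one and $t(n,1)=\binom{n}{2}$. For $(vi)$ I would use that any two edges of a tree lie on a common path: deleting one of them splits the tree, and routing from the far side across the other edge exhibits such a path; hence in any rainbow coloring all $n-1$ edges get distinct colours, so $rc(T)=n-1$ for every tree. As every connected graph has at least $n-1$ edges and a tree attains $rc=n-1\le n-1$, we get $t(n,n-1)=n-1$. For $(v)$, the bound $t(n,d)\ge n$ follows because $d\le n-2<n-1=rc(T)$ excludes all trees, forcing a cycle; the matching upper bound comes from the cycle $C_n$, which has $n$ edges and the known value $rc(C_n)=\lceil n/2\rceil\le d$ whenever $d\ge n/2$.

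Next the constructive bounds $(iii)$ and $(iv)$. For $(iv)$ my candidate is a generalized theta graph: fix two hubs $x,y$ and join them by $k=\lceil\frac{n-2}{d-2}\rceil$ internally disjoint paths, distributing the remaining $n-2$ vertices so each path has at most $d-2$ internal vertices, hence length at most $d-1$. A short computation bounds the number of edges by $n-1+k$ and the diameter by $d$. The essential step is the coloring: colour each path to be internally rainbow, but stagger the colour orders at the two ends so that, for a pair on different paths, the two half-paths meeting at a common hub use disjoint colours; verifying that $d$ colours always suffice for this is the crux of $(iv)$. For $(iii)$ I would build a small-diameter graph on a two- or three-vertex core with most vertices attached to two core vertices, tuned to exactly $2n-5$ edges, and check directly that three colours realise a rainbow path of length at most three between every pair.

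The hardest part is $(ii)$. I would first reformulate: since two colours force rainbow paths of length at most two, $rc(G)\le 2$ is equivalent to demanding that every non-adjacent pair $u,v$ has a common neighbour $w$ with $uw$ and $wv$ coloured differently, a ``bichromatic cherry''. This turns the question into a covering/separating problem. Assigning each vertex a distinct binary code of length $b=\lfloor\log_2 n\rfloor$ and using $b$ hubs, where the $i$-th hub joins every vertex by an edge coloured according to the $i$-th bit of its code, guarantees that two distinct codes differ in some coordinate and are separated at the corresponding hub; this already produces a $2$-rainbow connected graph with about $n\lfloor\log_2 n\rfloor$ edges. The main obstacle, where the exact expression $(n+1)\lfloor\log_2 n\rfloor-2^{\lfloor\log_2 n\rfloor}-2$ is earned, is to realise the hubs inside the original vertex set rather than as extra vertices and to prune redundant hub-edges, the saving of $2^{\lfloor\log_2 n\rfloor}+2$ edges reflecting short codes or prefixes that need not be fully connected, all while certifying that every pair still retains a bichromatic cherry. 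Getting this bookkeeping exactly right is the delicate point of the whole theorem.
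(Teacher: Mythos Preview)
This theorem is not proved in the present paper at all: it is quoted verbatim from Schiermeyer~\cite{sch2} as background, with no accompanying argument, so there is no ``paper's own proof'' to compare your proposal against. Consequently any assessment of your sketch has to be on its own merits rather than against the authors' treatment.

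On those merits, your outline for the easy block $(i)$, $(v)$, $(vi)$ is correct and standard (for $(vi)$ the cleaner way is to note every edge of a tree is a bridge and bridges need pairwise distinct colours, which you essentially say). Your plan for $(iv)$ via a generalised theta graph is indeed Schiermeyer's construction in spirit, and the binary-code/hub idea for $(ii)$ is the right heuristic for the $n\log_2 n$ order; however, in both $(ii)$ and $(iii)$ you stop short of an actual construction meeting the stated edge count, and the phrase ``getting this bookkeeping exactly right is the delicate point'' is an acknowledgement of a gap rather than a proof. If you intend to supply a self-contained argument you would need to exhibit the precise graphs and colourings achieving $(n+1)\lfloor\log_2 n\rfloor-2^{\lfloor\log_2 n\rfloor}-2$ and $2n-5$ edges respectively; otherwise, citing \cite{sch2} as the present authors do is the honest route.
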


In \cite{sch2}, Schiermeyer also got a lower bound of $t(n,2)$ by an
indirect method, and showed that $t(n,2)\geq n\log_2
n-4n\log_2\log_2 n-5n$ for sufficiently large $n$. Nevertheless, he
did not get a lower bound of $t(n,d)$ for $3\leq
d<\lceil\frac{n}{2}\rceil $. In this paper, we use a different
method to improve his lower bound of $t(n,2)$, and moreover we get a
lower bound of $t(n,d)$ for $3\leq d<\lceil\frac{n}{2}\rceil $.

\section{Main results}

Let $G$ be a $2$-rainbow connected graph of order $n$ with maximum
degree $\Delta(G)$. Pick a vertex $u\in V(G)$. Since $d(u)\leq
\Delta(G)$, there exist at most $\Delta(G)$ vertices adjacent to
$u$, and at most $\Delta(G)(\Delta(G)-1)$ vertices at distance $2$
from $u$. Since $diam(G)\leq rc(G)\leq 2$, we derive $n\leq
1+\Delta(G)+\Delta(G)(\Delta(G)-1)$. Thus, $\Delta(G) \geq
\sqrt{n-1}$. Since $\Delta(G)$ is an integer, we get

\begin{equation}
 \Delta(G) \geq \left\lceil\sqrt{n-1}\;\right\rceil.
\end{equation}

Next, we investigate the lower bound of $e_2(n)$.

\begin{proposition} For sufficiently large $n$,
$t(n,2)\geq n\log_2n-4n\log_2\log_2 n-2n.$
\end{proposition}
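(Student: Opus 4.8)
The plan is to translate the $2$-rainbow connected condition into a covering statement and then extract a lower bound on the degree sum by a Hansel-type counting argument. Fix a $2$-rainbow connected coloring $c$ of an extremal graph $G$ with $t(n,2)$ edges. Since $rc(G)\le 2$ forces $diam(G)\le 2$, every pair of vertices is either adjacent or joined by a rainbow path of length $2$; thus for each non-adjacent pair $\{u,v\}$ there is a common neighbour $w$ with $c(uw)\ne c(wv)$. For each vertex $w$ let $N_1(w)$ and $N_2(w)$ denote its neighbours along colour-$1$ and colour-$2$ edges respectively. Then $w$ certifies exactly those non-adjacent pairs with one endpoint in $N_1(w)$ and the other in $N_2(w)$, so the complete bipartite graphs with parts $N_1(w)$ and $N_2(w)$, as $w$ ranges over $V(G)$, cover all non-edges of $G$. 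Their total weight (the sum of the two part sizes, over all $w$) is $\sum_w\big(|N_1(w)|+|N_2(w)|\big)=\sum_w d(w)=2\,t(n,2)$, and this is the quantity I want to bound from below.

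The engine would be the following deletion experiment. Independently for each $w$, delete either all of $N_1(w)$ or all of $N_2(w)$, each with probability $\tfrac12$. A vertex $v$ survives precisely when, for every neighbour $w$, the deleted side is the one not containing $v$, so $\Pr[v\text{ survives}]=2^{-d(v)}$. If two non-adjacent vertices both survived, the vertex $w$ certifying that pair would have deleted one of them, a contradiction; hence the surviving set is always a clique, and taking expectations yields $\sum_{v}2^{-d(v)}\le\omega(G)$. Convexity of $2^{-x}$ then converts this into a lower bound on the degree sum $\sum_v d(v)=2\,t(n,2)$ that is roughly $n\log_2\big(n/\omega(G)\big)$, which already produces the correct order $n\log_2 n$ and is the source of the leading term.

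To reach the stated constants I would combine this with the elementary bound $\Delta(G)\ge\lceil\sqrt{n-1}\,\rceil$ established above and with a separation of the vertices by degree. Vertices of very large degree are few and contribute many edges outright, while on the remaining vertices the counting inequality forces the degrees up; choosing the degree threshold near $\log_2 n$ balances the two contributions and is what should generate the $\log_2\log_2 n$ correction. Carefully optimising this trade-off, and using that $n$ is large to absorb the error terms, is intended to give $t(n,2)\ge n\log_2 n-4n\log_2\log_2 n-2n$, with the gain over Schiermeyer's $-5n$ coming from a tighter simultaneous treatment of the high-degree vertices and of the slack in the covering step.

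The hard part will be pinning down the leading constant and the lower-order terms rather than merely the order of magnitude. The covering argument only guarantees that the survivors form a clique, not a single vertex, so one must control $\omega(G)$ — equivalently, the contribution of the dense spots — at the same time as the high-degree vertices, since a crude application of the counting inequality loses a constant factor. Overcoming precisely this slack, and thereby converting the raw estimate into the sharp dependence on $n$, is where the improvement to the additive $-2n$ term is won.
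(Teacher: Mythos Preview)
Your deletion argument is correct and does yield $\sum_v 2^{-d(v)}\le\omega(G)$, hence by convexity $\sum_v d(v)\ge n\log_2\!\big(n/\omega(G)\big)$. But since $\sum_v d(v)=2\,t(n,2)$, this gives only
\[
t(n,2)\ \ge\ \tfrac12\,n\log_2 n-\tfrac12\,n\log_2\omega(G),
\]
which is a factor of $2$ short in the leading term. The loss is structural, not a matter of controlling $\omega(G)$: in your symmetric cover $\{(N_1(w),N_2(w)):w\in V(G)\}$ the total weight $\sum_w(|N_1(w)|+|N_2(w)|)$ equals $2e(G)$ because every edge is counted at both endpoints, and the Hansel bound on that weight is exactly $n\log_2 n$---already tight for general bipartite covers. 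Even if $\omega(G)=O(1)$ you land at $\tfrac12 n\log_2 n$; restricting to low-degree vertices does not help either, since $\sum_{v\in S}d(v)\le 2e(G)$ still. So the later paragraphs, which treat the missing constant as an optimisation over the degree threshold and over $\omega(G)$, cannot close the gap.

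The paper recovers the full constant by making the count asymmetric. With $k\approx(\log_2 n)^2$, $S=\{v:d(v)<k\}$ and $T=V\setminus S$, each $u\in S$ is assigned only its coloured adjacency vector to $T$, a word in $\{-1,0,1\}^{|T|}$ with $a_u:=|N(u)\cap T|$ nonzero coordinates; it is consistent with $2^{|T|-a_u}$ completions in $\{-1,1\}^{|T|}$. The key claim is that any fixed completion is consistent with at most $k^2+1$ vertices of $S$: two such vertices agree on colours at every common $T$-neighbour, so their rainbow $2$-path must lie inside $G[S]$, and $G[S]$ has maximum degree $<k$, hence radius-$2$ balls of size at most $k^2+1$. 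This yields $\sum_{u\in S}2^{-a_u}\le k^2+1$, and AM--GM gives $e(S,T)=\sum_{u\in S}a_u\ge s\log_2 s-s\log_2(k^2+1)$, where each $S$--$T$ edge is counted \emph{once}. With $s\sim n$ and $k^2+1\le(\log_2 n)^4$ this is $n\log_2 n-4n\log_2\log_2 n-2n$. Thus the missing idea is not a sharper clique bound but the asymmetric $S$-versus-$T$ setup together with the replacement of the clique bound on survivors by the ball bound in $G[S]$.
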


\begin{proof}
Let $G$ be a graph with diameter $2$ and $c$ be a $2$-rainbow
connected coloring of $G$ with colors blue and red. Set
$k=\lfloor\log_2 n\rfloor^2-1$ and denote by $S$ the set of vertices
with degrees less than $k$. Assume that $S=\{u_1,u_2,\ldots,u_s\}$
and $T=V(G)\backslash S=\{u_{s+1},u_{s+2},\ldots,u_{s+t}\}$, where
$s+t=n$. For sufficiently large $n$, $k=\lfloor\log_2
n\rfloor^2-1\linebreak[2]\leq
\left\lceil\sqrt{n-1}\;\right\rceil\leq \Delta(G)$. By $(1)$ we know
that $T$ is nonempty. If $t=|T|\geq \frac{2n}{\log_2 n}$, then

\begin{eqnarray*}
e(G)\geq \frac{1}{2}\sum_{v\in T}d_{G}(v)&\geq&\frac{2n}{2\log_2 n}
\left(\left\lfloor\log_2 n\right\rfloor^2-1\right)\\[6pt]
&\geq&\frac{n}{\log_2 n}
\left(\left(\log_2 n-1\right)^2-1\right)\\[6pt]
&=& n\log_2 n-2n, \end{eqnarray*}

\noindent and we are done.

Suppose $t<\frac{2n}{\log_2 n}$, that is, $s>n-\frac{2n}{\log_2 n}$.
It is sufficient to show that $e(S,T)\geq n\log_2n-4n\log_2\log_2
n-2n.$

For every $u_i,\, 1\leq i\leq s$, we define a vector as follows:

$$\alpha(u_i)=(b_{i,1},b_{i,2},\ldots,b_{i,t}),$$ where
  \begin{equation*} b_{i,j}=
  \begin{cases} 1, & if\ c(u_iu_{s+j})\ is\ red;\\
  -1, & if\ c(u_iu_{s+j})\ is\ blue;\\
  0,  & if\ u_i\ and\ u_{s+j}\ is\ nonadjacent.
  \end{cases}
  \end{equation*}

Suppose $|N(u_i)\cap T|=a_i$, where $1\leq i\leq s$. Then
$e(S,T)=\sum_{i=1}^{s}a_i$, where $e(S,T)$ denotes the number of
edges between $S$ and $T$. We now estimate the value of $e(S,T)$.
For each $\alpha(u_i)$, we define a set $B_i$ as follows:
$B_i=\{$vectors obtained from $\alpha(u_i)$ by replace ``$0$'' of
$\alpha(u_i)$ by ``$1$'' or ``$-1$''$\}$. Because $|N(u_i)\cap
T|=a_i$, we have $|B_i|=2^{t-a_i}$ for each $i$, where $1\leq i\leq
s$. Set $B=\bigcup_{i=1}^s B_i$. Then $B$ is a multiset of
$t$-dimensional vectors with elements $1$ and $-1$. For each
$\alpha\in B$, $n_{\alpha}$ denotes the number of $\alpha$'s in $B$.
We have the following claim.\vspace{6pt}

{\it Claim~1. For each $\alpha\in B$, $n_{\alpha}\leq
k^2+1$.}\vspace{6pt}

\noindent{\em Proof of Claim~$1$.} If Claim~$1$ is not true, that
is, there exists a vector $\alpha$, without loss of generality, say
$\alpha=(b_1,b_2,\ldots,b_t)$, such that $n_{\alpha}\geq k^2+2$.
Clearly, it is not possible that there exists some $B_i$ such that
$B_i$ contains two $\alpha$'s. Thus, there exist $k^2+2$ integers,
without loss of generality, say $1,2,\ldots,k^2+2$, such that
$\alpha\in B_i$, where $1\leq i\leq k^2+2$. we next show that for
each $i$, where $2\leq i\leq k^2+2$, the distance between $u_1$ and
$u_i$ in $G[S]$ is at most $2$. In fact,
$c(u_1u_{s+j})=b_j=c(u_iu_{s+j})$ follows from the definition of
$B_1$ and $B_i$. Thus there exists no rainbow path between $u_1$ and
$u_i$ through a vertex of $T$. Hence, there must exist a rainbow
path between $u_1$ and $u_i$ with length at most $2$ in $G[S]$. On
the other hand, since $\Delta(G[S])\leq k$, the number of vertices
at distance $2$ from $u_1$ is at most $k^2+1$, which is a
contradiction, and the claim is thus true.

By Claim~$1$ we know
$$\sum_{i=1}^s|B_i|\leq (k^2+1)2^t,$$
Since $|B_i|=2^{t-a_i}$ for each $i$, where $1\leq i\leq s$,
$$\sum_{i=1}^s 2^{-a_i}\leq (k^2+1).$$
By the inequality between the geometrical and arithmetical means, we
have

$${\large\sqrt[s]{2^{-e(S,T)}}}={\large\sqrt[s]{2^{-\sum_{i}^s a_i}}}\leq
\frac{1}{s}\sum_{i=1}^s 2^{-a_i}\leq \frac{k^2+1}{s}.$$

\noindent Using the log function on both sides, we get
\begin{eqnarray}
e(S,T)\geq s\log_2 s-s\log_2 (k^2+1).
\end{eqnarray}
Note that $k^2+1=(\lfloor\log_2 n\rfloor^2-1)^2+1\leq((\log_2
n)^2-1)^2+1\leq \left(\log_2 n\right)^4-2\left(\log_2
n\right)^2+2\leq \left(\log_2 n\right)^4$. We have
\begin{eqnarray}
e(S,T)\geq s\log_2 s-4s\log_2\log_2 n.
\end{eqnarray}
Since $e(S,T)$ is monotonically increasing in $s$ and
$s>n-\frac{2n}{\log_2 n}$, we have
\begin{eqnarray*}
e(S,T)&\geq& \left(n-\frac{2n}{\log_2 n}\right)\log_2
\left(n-\frac{2n}{\log_2 n}\right)-4\left(n-\frac{2n}{\log_2
n}\right)\log_2\log_2 n\\[6pt]
\end{eqnarray*}
\begin{eqnarray*}
&=&n\log_2\left(n-\frac{2n}{\log_2  n}\right)-\frac{2n}{\log_2
n} \log_2\left(n-\frac{2n}{\log_2  n}\right)\\[6pt]
&&-4n\log_2\log_2 n+\frac{8n}{\log_2 n}\log_2\log_2 n\\[6pt]
&=&n\log_2n+n\log_2\left(1-\frac{2}{\log_2n}\right)-2n-\frac{2n}{\log_2
n}\log_2\left(1-\frac{2}{\log_2  n}\right)\\[6pt]
&&-4n\log_2\log_2 n+\frac{8n}{\log_2 n}\log_2\log_2 n\\[6pt]
&\geq&n\log_2n-4n\log_2\log_2 n-2n\\[6pt]
&&+n\log_2\left(1-\frac{2}{\log_2n}\right)+\frac{8n}{\log_2 n}\log_2\log_2 n\\[6pt]
&=&n\log_2n-4n\log_2\log_2 n-2n\\[6pt]
&&+\frac{2n}{\log_2 n}\log_2\left(\left(1-\frac{2}{\log_2n}\right)^{\frac{\log_2 n}{2}}
\left(\log_2 n\right)^4\right)\\[6pt]
&\geq&n\log_2n-4n\log_2\log_2 n-2n.
\end{eqnarray*}
The last inequality holds since
$\left(1-\frac{2}{\log_2n}\right)^{\frac{\log_2 n}{2}}$ is
monotonically increasing in $n$ and tends to $\frac{1}{e}$. This
completes the proof.
\end{proof}

Before showing the lower bound on $t(n,d)$ for each $d\geq 3$, we
need the following theorem and obeservation.

\begin{theorem}{\upshape (Jarry and Laugier\cite{jarry})}

Any $2$-edge-connected graph of order $n$ and of odd diameter $p\geq
2$ contains at least $\left\lceil\frac{np-(2p+1)}{p-1}\right\rceil$
edges. Any $2$-edge-connected graph of order $n$ and of even
diameter $p$ contains at least
$\min\left\{\left\lceil\frac{np-(2p+1)}{p-1}\right\rceil,
\left\lceil\frac{(n-1)(p+1)}{p}\right\rceil\right\}$
edges.\end{theorem}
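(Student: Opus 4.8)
The plan is to translate the edge count into the cycle rank of $G$ and then show that a small diameter forces this rank to be large. Since $G$ is connected we have $e(G)=n-1+\beta$, where $\beta=e(G)-n+1\ge 1$ is the number of independent cycles (being $2$-edge-connected, $G$ contains a cycle and in fact admits an ear decomposition into $\beta$ ears). Hence the whole statement is equivalent to a lower bound on $\beta$: I must prove $\beta\ge\frac{n}{p-1}-O(1)$ in the odd case, with the competing estimate $\beta\ge\frac{n}{p}-O(1)$ in the even case, the smaller of the two producing the minimum appearing in the theorem.

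First I would reduce to a graph of large minimum degree by \emph{suppressing} all degree-$2$ vertices: replace every maximal path whose interior vertices have degree $2$ in $G$ (a \emph{thread}) by a single edge, obtaining a multigraph $H$. Suppression preserves both $2$-edge-connectivity and the cycle rank $\beta$, and every vertex of $H$ then has degree at least $3$ (the case where $G$ is a single cycle is handled directly and is already extremal). From $2e(H)=\sum_{v\in V(H)}\deg_H(v)\ge 3|V(H)|$ together with $e(H)=|V(H)|-1+\beta$ I get $|V(H)|\le 2\beta-2$ and, correspondingly, at most $3\beta-3$ threads. Therefore the $n-|V(H)|\ge n-2\beta+2$ suppressed vertices are spread over at most $3\beta-3$ threads.

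The crux is to bound the length a single thread may carry, and here the diameter enters. The interior vertices of a thread can be reached only through its two endpoints, so a vertex deep inside a long thread is far from the rest of $G$; sharper still, choosing the midpoints of two threads that share a short connection between their endpoints realizes a distance essentially equal to the sum of their half-lengths, which must not exceed $p$. Imposing this over all pairs caps the total number of degree-$2$ vertices the threads can absorb, and optimizing the distribution over the at most $3\beta-3$ threads yields the desired lower bound on $\beta$. The parity of $p$ surfaces exactly at this step: for even $p$ a thread can be centred so that two midpoints lie at distance exactly $p$, which is the most economical configuration and gives the alternative term $\lceil(n-1)(p+1)/p\rceil$, whereas for odd $p$ perfect centring is impossible and the resulting loss forces the single, larger term $\lceil(np-(2p+1))/(p-1)\rceil$. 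I expect the main obstacle to be making the thread-length estimate tight rather than merely order-correct: analysing one thread in isolation loses a factor of about two, so one must reason about pairs of threads simultaneously and then control the integer rounding carefully so as to land precisely on the stated ceilings.
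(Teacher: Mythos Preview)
The paper does not prove this theorem at all: it is quoted verbatim as a result of Jarry and Laugier \cite{jarry} and is used only as a black box in the proof of Proposition~2. Consequently there is no ``paper's own proof'' against which your proposal can be compared.

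As to the proposal itself, the reduction via suppression of degree-$2$ vertices is sound, and the counts $|V(H)|\le 2\beta-2$ and $e(H)\le 3\beta-3$ are correct. The difficulty you flag is real, however, and your outline does not yet resolve it. If each of the at most $3\beta-3$ threads carried at most $L$ interior vertices, you would obtain $n\le(2\beta-2)+(3\beta-3)L$, hence $\beta\gtrsim n/(3L+2)$; to reach $\beta\gtrsim n/(p-1)$ you would need $L\approx(p-3)/3$, which is far stronger than any bound the diameter hypothesis directly supplies (a single thread can have length close to $p$, and a pairwise midpoint argument only caps the two longest half-lengths, not the total). So the passage from ``midpoints of two threads are at distance at most $p$'' to a global bound on $\sum_i\ell_i$ that lands exactly on the stated ceilings is the entire content of the theorem, and it is precisely the part that remains a sketch in your write-up. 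Without that step made explicit---including the integer-rounding analysis that produces the parity split---the argument cannot be assessed as correct.
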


Note that $\left\lceil\frac{
np-(2p+1)}{p-1}\right\rceil=\left\lceil\frac{(n-1)(p-1)+n+p-1-(2p+1)}{p-1}\right\rceil=\left\lceil
n-1+\frac{n-p-2}{p-1}\right\rceil\geq\left\lceil
n-1+\frac{n-p-2}{p}\right\rceil=
n-2+\left\lceil\frac{n-2}{p}\right\rceil$ and
$\left\lceil\frac{(n-1)(p+1)}{p}\right\rceil=\left\lceil\frac{(n-1)p+n-1}{p}\right\rceil=\left\lceil
n-1+\frac{n-1}{p}\right\rceil\linebreak[2]\geq\left\lceil
n-1+\frac{n-p-2}{p}\right\rceil=
n-2+\left\lceil\frac{n-2}{p}\right\rceil$. Thus, any
2-edge-connected graph of order $n$ and of diameter $p\geq 2$
contains at least $n-2+\left\lceil\frac{n-2}{p}\right\rceil$ edges.

Let $G$ be a graph and $c$ be a rainbow connected coloring of $G$.
It is easy to see that different bridges of $G$ must receive
different coloring under $c$. Therefore, the following observation
is obvious.

\begin{observation}
The rainbow connection number of a graph is at least the number of
bridges in the graph.
\end{observation}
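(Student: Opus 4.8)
The plan is to show that in any rainbow connected coloring $c$ of $G$, distinct bridges receive distinct colors. Since $rc(G)$ equals the number of colors in an optimal rainbow connected coloring, this immediately yields $rc(G)\geq b(G)$, where $b(G)$ denotes the number of bridges of $G$. Thus it suffices to fix an optimal coloring $c$ and prove that $c(e_1)\neq c(e_2)$ for any two distinct bridges $e_1,e_2$.

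The mechanism forcing a bridge to ``use up'' a color is that a bridge $e$ lies on every path joining the two components of $G-e$. To force two bridges to differ, I would look for a single pair of vertices $x,y$ with the property that every $x$-$y$ path in $G$ contains both $e_1$ and $e_2$. If such a pair exists, then the rainbow $x$-$y$ path guaranteed by $c$ contains $e_1$ and $e_2$; since all edges on a rainbow path have distinct colors, $c(e_1)\neq c(e_2)$ follows at once. Running this over all pairs of bridges shows that the colors of the bridges are pairwise distinct.

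To produce such a pair $x,y$, I would pass to the bridge tree $T_G$ obtained by contracting each maximal $2$-edge-connected component of $G$ to a single node; its edges are exactly the bridges of $G$. The key structural fact is that any two distinct edges of a tree lie on a common simple path, so $e_1$ and $e_2$ lie on the tree-path joining two nodes $A$ and $B$ of $T_G$; choosing $x$ in the component collapsed to $A$ and $y$ in the component collapsed to $B$ does the job. Indeed, any $x$-$y$ path in $G$ projects to a walk in $T_G$ from $A$ to $B$, and since $T_G$ is a tree this walk must traverse every edge of the tree-path, in particular $e_1$ and $e_2$. The only point needing care --- and the main obstacle --- is verifying this projection argument, i.e. that each of the two bridges is unavoidable on every $x$-$y$ path even in the presence of the other; this is precisely where the full tree structure of the contracted graph, rather than a single bridge cut, is essential. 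With that in hand the conclusion is immediate.
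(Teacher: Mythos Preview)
Your argument is correct and follows the same idea the paper uses: distinct bridges must receive distinct colors in any rainbow coloring, so the number of colors is at least the number of bridges. The paper simply asserts this as obvious without proof, whereas you supply the details via the bridge tree; your projection argument is sound, though one can also reach the same conclusion more directly by noting that for bridges $e_1\neq e_2$, the second bridge lies entirely in one side of $G-e_1$, and picking $x,y$ on appropriate sides forces every $x$--$y$ path through both.
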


\begin{proposition}For $3\leq d<\left\lceil\frac{n}{2}\right\rceil$,
$$t(n,d)\geq n-d-3+\left\lceil\frac{n-1}{d}\right\rceil.$$
\end{proposition}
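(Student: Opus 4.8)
The plan is to let $G$ be a $d$-rainbow connected graph of order $n$ with exactly $t(n,d)$ edges, so that $rc(G)\le d$. Two facts are then immediate and drive everything: since $diam(G)\le rc(G)\le d$ we have $diam(G)\le d$, and by Observation 1 the number $b$ of bridges of $G$ satisfies $b\le rc(G)\le d$. I would then split into two cases according to whether $G$ is $2$-edge-connected.

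If $G$ is $2$-edge-connected, the bound is almost free. Writing $p=diam(G)$, either $p=1$ (so $G=K_n$ and there is nothing to prove) or $2\le p\le d$, in which case the simplified Jarry--Laugier estimate stated after Theorem 2 gives $e(G)\ge n-2+\lceil (n-2)/p\rceil\ge n-2+\lceil (n-2)/d\rceil$, since $\lceil (n-2)/p\rceil$ is nonincreasing in $p$. A one-line check shows that $n-2+\lceil (n-2)/d\rceil$ exceeds the target $n-d-3+\lceil (n-1)/d\rceil$ by at least $d$, so this case is finished.

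The substantive case is when $G$ has $b\ge 1$ bridges, and the key idea is to repair $G$ into a $2$-edge-connected graph by adding only a few edges and then invoke the same bound. Concretely, I would prove the folklore augmentation lemma that a connected graph with $b$ bridges can be made $2$-edge-connected by adding at most $b$ edges: in the bridge-block tree pick any two leaf blocks $X,Y$ and join a vertex of $X$ to a (non-adjacent) vertex of $Y$; the new edge creates a cycle through every bridge on the $X$--$Y$ tree-path, so those bridges (at least one) cease to be bridges while no new bridge is created, and iterating at most $b$ times removes them all. Call the resulting graph $G'$, obtained by adding $a\le b$ edges. Adding edges cannot increase distances, so $diam(G')\le diam(G)\le d$, and we may assume $diam(G')\ge 2$ (otherwise $G'=K_n$ and the bound is trivial). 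Applying the simplified Jarry--Laugier bound to the $2$-edge-connected graph $G'$ gives $e(G')\ge n-2+\lceil (n-2)/d\rceil$, whence $e(G)=e(G')-a\ge n-2+\lceil (n-2)/d\rceil-d$, using $a\le b\le d$.

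It then remains to compare $n-2-d+\lceil (n-2)/d\rceil$ with the claimed $n-d-3+\lceil (n-1)/d\rceil$; this reduces to $\lceil (n-2)/d\rceil+1\ge\lceil (n-1)/d\rceil$, which holds because raising the numerator by $1$ increases the ceiling by at most $1$. I expect the main obstacle to be the augmentation lemma together with the bookkeeping $a\le b\le d$: this is exactly the step that lets the bridges be "paid for" by the term $-d$ in the target, and it is also where the correct constant (the $-d-3$ rather than $-2$) is pinned down. It is worth noting that a naive block-by-block application of Jarry--Laugier does \emph{not} suffice, since a long chain of small blocks would seem to beat the bound; such configurations are excluded only because their diameter exceeds $d$, and the augmentation argument sidesteps this difficulty by reducing everything to a single $2$-edge-connected graph on the same vertex set. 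Everything else --- the diameter monotonicity, the degenerate complete-graph case, and the final ceiling inequality --- is routine.
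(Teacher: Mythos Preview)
Your argument is correct and takes a genuinely different route from the paper. The paper \emph{deletes} the $k\le d$ bridges, obtaining $k+1$ components, applies the Jarry--Laugier estimate to each nontrivial component $G_i$ (using that $\mathrm{diam}(G_i)\le d$), and then sums: $e(G)=k+\sum_i e(G_i)\ge n-1-k_1+(n-k-1-k_1)/d$, which after bounding $k_1$ and $k$ by $d$ yields the result. You instead \emph{add} at most $b\le d$ edges via the leaf-pairing augmentation to produce a single $2$-edge-connected graph $G'$ on the same vertex set with $\mathrm{diam}(G')\le d$, apply Jarry--Laugier once to $G'$, and subtract. Both proofs rest on exactly the same three ingredients ($\mathrm{diam}\le d$, bridges $\le d$, Jarry--Laugier), but your reduction avoids the component-by-component summation at the cost of invoking the folklore augmentation lemma. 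One small correction to your commentary: a block-by-block application \emph{does} suffice and is precisely what the paper does; the ``long chain of small blocks'' is ruled out not by a diameter argument but simply because the number of bridges---hence of blocks---is at most $d+1$ by Observation~1, which is the same bound you use to control $a$.
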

\begin{proof}
Let $G$ be a $k$-rainbow connected graph of order $n$. Suppose that
$G$ has $k$ bridges and $G'$ is the graph obtained from $G$ by
deleting all the bridges. Then $G'$ has $k+1$ components. We have
$k\leq d$ by Observation~1. Suppose that $G_1,G_2,\cdots,G_{k_1}$
are the nontrivial components of $G'$. Thus, $G'$ has $k_2=k+1-k_1$
trivial components. Let $n_i$ denote the order of $G_i$ and $d_i$
denote the diameter of $G_i$. We have that $n_1+n_2+\cdots
+n_{k_1}=n-k_2$.\vspace{6pt}

{\it Claim~2. Each of the graph $G_i$ is either a $2$-edge-connected
graph with diameter at least $2$ or a complete graph of order at
least $3$.}\vspace{6pt}

\noindent{\em Proof of Claim~$2$.} Suppose that some of the graph
$G_i$ is neither a 2-edge-connected graph with diameter at least
$2$, nor a complete graph of order at least $3$. That is, the graph
$G_i$ is a complete graph of order less than $3$. Since $G_i$ is
nontrivial, $G_i$ is a complete graph of order~$2$. However, the
only edge of $G_i$ is clearly a cut edge of $G$, a contradiction.
Thus, this claim holds.

Now consider the number of edges of $G_i$. If $G_i$ is a
2-edge-connected graph with diameter $d_i\geq 2$, then by Theorem~4
we have that $e(G_i)\geq
n_i-2+\left\lceil\frac{n_i-2}{d_i}\right\rceil$. If not, that is,
$G_i$ is a complete graph of order at least $3$ by Claim~2. We have
that $e(G_i)\geq {n_i\choose{2}}\geq
n_i-2+\left\lceil\frac{n_i-2}{d_i}\right\rceil$. Thus, $e(G_i) \geq
n_i-2+\left\lceil\frac{n_i-2}{d_i}\right\rceil$ for each $i$, where
$1\leq i\leq k_1$.\vspace{6pt}

{\it Claim~3. $d_i\leq d$.}\vspace{6pt}

\noindent{\em Proof of Claim~$3$.} Let $x$ and $y$ be two vertices
in $G_i$. Since the shortest path connecting $x$ and $y$ in $G$ must
be a path contained $G_i$, we have $d_{G_i}(x,y)\leq d_{G}(x,y)$.
Thus, $d_i\leq diam(G)\leq d$.

Now evaluate the number of edges in $G$. We have
\begin{eqnarray*}
e(G)&=&k+\sum_{i=1}^{k_1}e(G_i)\\[6pt]
&=&k+\sum_{i=1}^{k_1} \left(n_i-2+\left\lceil\frac{n_i-2}{d_i}\right\rceil\right)\\[6pt]
\end{eqnarray*}
\begin{eqnarray*}
&\geq&k+\sum_{i=1}^{k_1} \left(n_i-2+\left\lceil\frac{n_i-2}{d}\right\rceil\right)\\[6pt]
&\geq&k+\sum_{i=1}^{k_1} \left(n_i-2+\frac{n_i-2}{d}\right)\\[6pt]
&=& k+(n-k_2)-2k_1+\frac{(n-k_2)-2k_1}{d}\\[6pt]
&=& n-1-k_1+\frac{n-k-1-k_1}{d}\\[6pt]
&\geq & n-1-k+\frac{n-2k-1}{d}\\[6pt]
&\geq& n-d-1+\frac{n-2d-1}{d}\\[6pt]
&=& n-d-3+\frac{n-1}{d}.
\end{eqnarray*}
Thus, $e_d(n)\geq n-d-3+\lceil\frac{n-1}{d}\rceil$.
\end{proof}

\begin{proposition}
For $3\leq d <\lceil\frac{n}{2}\rceil$, $t(n,d)\leq
n-2+\lceil\frac{n}{d-1}\rceil$.
\end{proposition}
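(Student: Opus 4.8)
The plan is to prove the bound by exhibiting a single witness graph: for each admissible $(n,d)$ I will build a graph $G$ on $n$ vertices with $rc(G)\le d$ and at most $n-2+\lceil n/(d-1)\rceil$ edges. Since $t(n,d)$ is the minimum size over all $d$-rainbow connected graphs of order $n$, one such $G$ suffices.

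For the graph I would take the sparsest natural candidate of diameter $d$: fix two vertices $u,v$ and join them by several internally disjoint $u$--$v$ paths, distributing the other $n-2$ vertices among these paths so that each path has length at most $d$ and there are $p=\lceil n/(d-1)\rceil$ of them (or fewer). Counting gives $e(G)\le n-2+p$, which is the claimed bound, and a short distance computation gives $diam(G)\le d$; both are routine. The hypothesis $d<\lceil n/2\rceil$ is exactly what prevents a single length-$\le d$ path from absorbing all $n-2$ internal vertices, forcing $p\ge 2$, so that $G$ is $2$-edge-connected and bridgeless.

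The substance of the proof is the $d$-edge-coloring. I would color each $u$--$v$ path as a rainbow path using all $d$ colors, choosing the orders on distinct paths to be reverses (or cyclic shifts) of one another. This settles at once the pair $\{u,v\}$, every endpoint-to-internal pair, and every pair of vertices on a common path, since each is joined by a rainbow subpath. A pair of internal vertices $x,y$ on different paths would be routed through whichever of $u,v$ is nearer, and the path colorings must be arranged so that the two half-paths meeting at that endpoint are color-disjoint.

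This last point is where I expect the real obstacle to lie. Two internal vertices at the same small distance from a common endpoint admit only the length-$2$ detour through that endpoint: every alternative has length greater than $d$ and so cannot be rainbow with only $d$ colors. Hence the edges incident to an endpoint must all receive distinct colors, which caps the number of paths at $d$. Thus the plain two-hub construction is $d$-rainbow connected only when $p\le d$, i.e.\ essentially when $d\gtrsim\sqrt n$; for the complementary range of small $d$ (where many short paths are needed) the construction has to be refined so that no vertex is the unique connector of more than $d$ path-ends---for instance by replacing each of $u,v$ with a short, internally rainbow-colored gadget that disperses the attachment points---while keeping the total size within $n-2+\lceil n/(d-1)\rceil$ and the diameter within $d$. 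Carrying out this dispersal and then verifying rainbow connectivity for all cross-gadget pairs is the crux of the argument.
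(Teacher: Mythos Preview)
Your diagnosis of the obstruction is correct, and it is fatal rather than a technicality. In the two-hub theta graph, take $x$ and $y$ adjacent to $u$ on distinct length-$d$ paths; the detour through $v$ has length $2d-2>d$, so the only candidate rainbow walk is $x\,u\,y$, forcing $c(ux)\neq c(uy)$. Hence all $p$ edges at $u$ must receive distinct colors, and the construction is $d$-rainbow connected only when $p\le d$, i.e.\ essentially $n\le d(d-1)+2$. The dispersal gadget you propose is not a routine patch: spreading $p$ path-ends over several attachment vertices while keeping the diameter at most $d$ and keeping a rainbow route between every cross-gadget pair costs additional edges inside the gadget, and you give no argument that this can be done within the budget $n-2+\lceil n/(d-1)\rceil$. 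As it stands, the proposal proves the bound only in the regime $d\gtrsim\sqrt{n}$ and leaves the harder small-$d$ range open.

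The paper avoids the whole difficulty by collapsing your two hubs into one. Take $q=\lceil n/(d-1)\rceil-1$ cycles of length $d$, identify one vertex of each to a common vertex $u$, and attach the leftover vertices as pendants at $u$; the edge count is exactly $n-2+\lceil n/(d-1)\rceil$. Every cycle is colored $1,2,\dots,d$ in the same cyclic order. The key point is that a vertex $x$ at position $k$ on its cycle now has \emph{two} routes to $u$, with complementary color sets $\{1,\dots,k\}$ and $\{k+1,\dots,d\}$; for $y$ at position $\ell$ on another cycle one simply pairs $\{1,\dots,\min(k,\ell)\}$ with $\{\max(k,\ell)+1,\dots,d\}$ to get a rainbow $x$--$u$--$y$ path. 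So the hub can have arbitrarily large degree with only two colors appearing on its cycle-edges, and the argument goes through for every $d$ in the stated range. In effect, identifying your $u$ with your $v$ turns each path into a cycle and supplies exactly the second route that your construction lacks.
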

\begin{proof} Set $n=q(d-1)+r$, where $0< r\leq d-1$. Then
$q=\lceil\frac{n}{d-1}\rceil-1$. Pick $q$ cycles of length $d$, say
$C_1,C_2,\cdots C_q$. Identify $u_i$ as a vertex $u$, where $u_i\in
V(C_i)$ and $i=1,2,\ldots,q$. Finally, we attach $r$ edges to the
new vertex $u$. Denote by $G_d(n)$ the resulting graph, see Fig.1
for details. It is easy to check that
$e(G_d(n))=n-1+q=n-2+\lceil\frac{n}{d-1}\rceil$.

\begin{figure}[h,t,b,p]
\begin{center}
\scalebox{0.7}[0.7]{\includegraphics{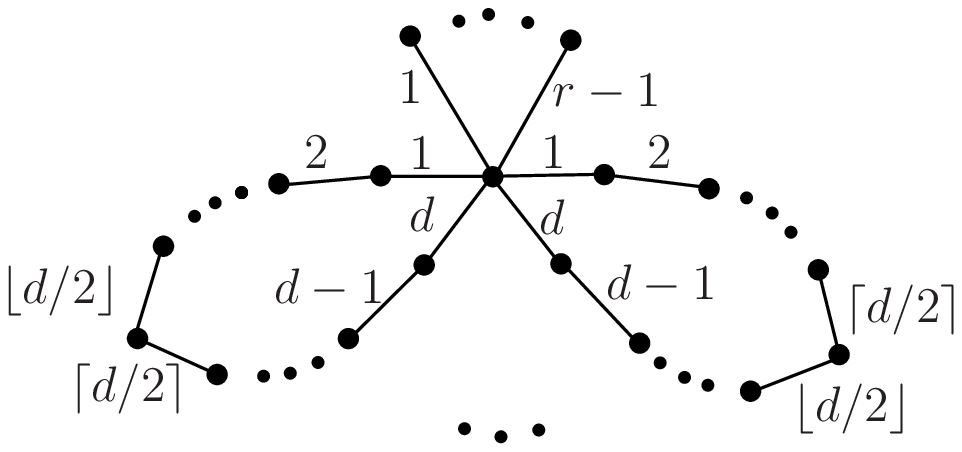}}\vspace{1.0cm}

Fig.1. A $d$-rainbow connected coloring of the graph $G_d(n)$.
\end{center}
\end{figure}

Now, we show that the graph $G_d(n)$ is $d$-rainbow connected. For
each $C_i$, first, color an incident edge of $u$ on $C_i$ by $1$,
and the other one by $d$; second, color the edge adjacent to a
$1$-color edge on $C_i$ by $2$, and color the edge adjacent to a
$d$-color edge on $C_i$ by $d-1$. We do this until all edges of
$C_i$ are colored. For the $r$ bridges, we color them by different
colors that have been used to color $C_i$. It is easy to check that
the above coloring is a $k$-rainbow connected coloring.
\end{proof}

Combining Propositions~1, 2 and~3, the following theorem holds.

\begin{theorem}
$(i)$ For sufficiently large $n$, $t(n,2)\geq
n\log_2n-4n\log_2\log_2 n-2n$.

$(ii)$ For $3\leq d <\lceil\frac{n}{2}\rceil$,
$n-d-3+\left\lceil\frac{n-1}{d}\right\rceil\leq t(n,d)\leq
n-2+\left\lceil\frac{n}{d-1}\right\rceil.$
\end{theorem}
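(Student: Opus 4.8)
The plan is to assemble the three propositions proved above into a single statement; no new argument is needed beyond checking that their hypotheses line up. Part $(i)$ is nothing but Proposition~1, which for sufficiently large $n$ already gives $t(n,2)\geq n\log_2 n-4n\log_2\log_2 n-2n$, so I would simply quote it.

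For part $(ii)$, I would obtain the lower bound from Proposition~2 and the upper bound from Proposition~3. Both of these propositions are stated for the same range $3\leq d<\lceil n/2\rceil$, so on that range they may be invoked simultaneously: Proposition~2 yields $t(n,d)\geq n-d-3+\lceil (n-1)/d\rceil$ and Proposition~3 yields $t(n,d)\leq n-2+\lceil n/(d-1)\rceil$. Concatenating these two inequalities produces exactly the sandwich
\[
n-d-3+\left\lceil\frac{n-1}{d}\right\rceil\leq t(n,d)\leq n-2+\left\lceil\frac{n}{d-1}\right\rceil,
\]
which is the assertion of $(ii)$.

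The genuinely substantive steps have already been carried out in the propositions, so I do not expect any obstacle in this final assembly. The only point that demands a moment's care is confirming that the admissible ranges of $d$ coincide; since both Proposition~2 and Proposition~3 carry the identical restriction $3\leq d<\lceil n/2\rceil$, there is nothing further to check. Were I instead proving the whole package from scratch, the real difficulty would lie in Proposition~1 --- specifically the counting argument of its Claim~1 that bounds the multiplicity $n_\alpha$ by $k^2+1$ using the diameter-$2$ structure of $G[S]$, together with the geometric--arithmetic mean estimate --- and, for the lower bound in $(ii)$, in the bridge-decomposition argument of Proposition~2 that feeds each nontrivial component into the Jarry--Laugier bound recorded in Theorem~4.
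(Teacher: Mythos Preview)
Your proposal is correct and matches the paper's own argument exactly: the theorem is stated as an immediate consequence of Propositions~1, 2, and~3, with no further work required. Your additional remarks about where the substantive content lies are accurate but go beyond what the paper itself records at this point.
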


\end{document}